\newtheorem{theorem}{Theorem}
\newtheorem{lemma}{Lemma}
\newtheorem{corollary}{Corollary}
\begin{document}
\begin{center}
  \Large
  \textbf{Optimal designs for two-level main effects models on a
  restricted design region}
\end{center}
\begin{center}
  \textbf{Fritjof Freise\footnote{corresponding author}\\
    TU Dortmund University, Department of Statistics,\\
    Vogelpothsweg 87, 44227 Dortmund, Germany,\\
    e-mail: fritjof.freise@tu-dortmund.de}
\end{center}
\begin{center}
  \textbf{Heinz Holling\\
  University of M\"unster, Institute for Psychology,\\
  Fliednerstra{\ss}e 21, 48149 M\"unster, Germany,\\
  e-mail: holling@wwu.de}
\end{center}
\begin{center}
  \textbf{Rainer Schwabe\\
    University of Magdeburg, Institute for Mathematical Stochastics,\\
    Universit\"atsplatz 2, 39106 Magdeburg, Germany,\\
    e-mail: rainer.schwabe@ovgu.de}
\end{center}

\begin{abstract}
  We develop $D$-optimal designs for linear main effects models on a
  subset of the $2^K$ full factorial design region, when the number of
  factors set to the higher level is bounded.  It turns out that in
  the case of narrow margins only those settings of the design points
  are admitted, where the number of high levels is equal to the upper
  or lower bounds, while in the case of wide margins the settings are
  more spread and the resulting optimal designs are as efficient as a
  full factorial design.  These findings also apply to other
  optimality criteria.
\end{abstract}

Keywords: $D$-optimality, Restricted design region, Invariant design
criterion, Two-level factorial designs

\section{Introduction}
The motivation for this work comes from the problem of calibration of
items in educational and psychological tests.  These items are
constructed using a number of rules which may be either applied or
not.  In calibration experiments items are presented to a large number
of individuals with essentially known ability.  The item parameters
describe the influence of each rule on the mean score of the
individuals and are to be estimated by the responses in the
calibration experiment.

A restriction arising in this scenario is, that items become more
difficult, if the number of active rules is increased.  Hence, from a
practical point of view it would not make much sense to use only one
rule or no rule at all, because the item would be too easy.  On the
other hand the item would become too difficult, if the number of
active rules is too large.  For those items with too few or too many
rules it would be doubtful that a linear model assumed is valid.  This
matter imposes a restriction on the design region, which allows only
such items with a bounded number of active rules.  For example, in an
experiment with six different rules it would be meaningful to restrict
to items with at least two and at most four active rules.  In
particular, under these constraints neither the full factorial nor
regular fractional factorial designs can be used any more.

In the present paper we will consider a linear model providing the
fundamentals of the so-called classical test
theory~\citep[e.g.][]{McDonald:1999} which are still mostly used for
the development and calibration of educational and psychological
tests.

We will start in Section~\ref{sec:Examples} by briefly outlining rule
based item generation. Then, in Section~\ref{sec:ModelInfoInvDesigns}
the model and its information matrix are presented, which is the basis
for the comparison of designs.  After a short introduction to optimal
design and invariance, the special structure of the information matrix
is discussed in Section~\ref{sec:StructureOfInfo}.  This is followed
in the subsequent section by the main results, which constitute
conditions on designs for $D$-optimality.  Proofs and exemplary tables
of designs are deferred to appendices.

Our results are related to work in spring balance weighing and
chemical balance weighing designs. In contrast to the model considered
here these usually do not incorporate constants. For $D$- and
$A$-optimal designs with restrictions on the number of objects used in
each weighing see~\citet{HudaMukerjee:1988}. Optimality for spring
balance designs without restrictions but including a constant in the
model is considered in~\citet*{FilovaHarmanKlein:2011a}.
\section{Rule-based item generation}
\label{sec:Examples}

Items of educational and psychological tests should neither be too
easy nor too difficult. Otherwise, ceiling or bottom effects may
occur, i.e.\ the number of correctly solved items of several
respondents reach the maximal or minimal value. The information of
such items may be severely impaired. Furthermore, according to a
well-known result of classical test theory mean item difficulties are
desirable since they foster high item discriminations, i.e.\ high
correlations between item scores and the total scores.

For rule-based item generation~\citep[e.g.][]{ArendasySommer:2007},
an efficient method for item development, this objective can be best
achieved by items with an appropriate number of rules. The rules are
usually represented by particular demands on cognitive processing and
determine the item parameters, mostly item difficulty. Such item
generation will be briefly illustrated by items measuring mental speed
for numerical operations.

Each item may consist of a comparable set of 20 numbers with 4
digits, such as 3412, 5364, 2774, $\hdots$, 8732. These numbers are
nowadays often generated on the fly and displayed on a computer
screen for a certain amount of time. For items with only one rule
respondents with high ability will mark most numbers correctly if not
all. On the other hand, difficult items characterised by 6 of more
rules will lead to low scores especially for respondents with low
abilities.

Another example for rule based generated items which measure human
memory are represented by sets of stimuli which are defined by binary
characteristics (rules) and generated according to a full factorial
design. Two basic elements, e.g., circles and triangles are
furthermore characterised by a set of binary attributes, for example,
colour, size or shading. Again, the difficulty of these items is
mainly determined by the number of the attributes (rules). These items
will be displayed to the respondents for a certain period of time and
have to be recognised by the respondents some time later.

In general, when a set of rule-based items will be presented to a
sample of respondents, items with a too small or a too large number of
rules should not be used in order to avoid ceiling and bottom
effects. Furthermore, many respondents will not obtain item scores
near to the mean score. Hence, to estimate the influence of the rules
on the difficulty of the items by linear models the design region has
to be restricted.

\section{Model, Information and Design Invariance}
\label{sec:ModelInfoInvDesigns}

We consider an experiment in which $N$ items are presented and
responses $Y_1, \ldots, Y_N$ are observed.  The number of rules, which
are used to construct the items, is $K$.  Then the items can be
characterised by the corresponding design points
$\bm{x}_i=(x_{i1}\,\ldots,x_{iK})^\top\in\{-1,+1\}^K$, where the
entries $x_{ij}$ are equal to $+1$, if the $j$-th rule is used in the
construction of the $i$-th item, and $x_{ij}=-1$, if the rule is not
used.  We assume that only main effects occur and that there are no
interactions between the rules.  Then the difficulties of the items
are specified by the parameter vector
$\bm{\beta}=(\beta_0,\beta_1,\ldots,\beta_K)^\top\in\mathbb{R}^p$,
where the number of parameters equals $p = K + 1$, and which includes
a constant term $\beta_0$ besides $K$ parameters $\beta_j$,
$j=1,\ldots,K$, corresponding to the main effects of the $K$ rules.
The model can be written in a general linear model equation as
\begin{equation*}
  Y_{i}=\bm{f}(\bm{x}_{i})^\top\bm{\beta}+\varepsilon_{i}\,,
\end{equation*}
$i=1,\ldots,N$, with regression function
$\bm{f}(\bm{x})=(1,\bm{x}^\top)^\top=(1,x_{1},\ldots,x_{K})^\top$.  As
usual in linear models it is assumed that the errors
$\varepsilon_{1},\ldots,\varepsilon_{N}$ are uncorrelated and
homoscedastic with mean $\mathrm{E}(\varepsilon_{i})=0$ and variance
$\mathrm{Var}(\varepsilon_{i})=\sigma^{2}$.  By letting
$ \bm{Y}=(Y_{1},\ldots,Y_{N})^\top$ and
$ \bm{\varepsilon}=(\varepsilon_{1},\ldots,\varepsilon_{N})^\top$ the
vector of observations and errors, respectively, and
$ \mathbf{F}=(\bm{f}(\bm{x}_{1}),\ldots,\bm{f}(\bm{x}_{N}))^\top$ the
design matrix, the model can be written in vector notation
\begin{equation*}
  \bm{Y} = \mathbf{F} \bm{\beta}+\bm{\varepsilon}\,.
\end{equation*}

In what follows we consider the situation that the design region
$\mathcal{X}\subseteq \{-1,+1\}^K$ is restricted by the possible
number of active rules, i.e.\ the number of factor levels $+1$ in a
design point $\bm{x}$, which is given by
$ d(\bm{x})= (K+\sum_{j=1}^{K}x_{j})/2$.  We denote the minimal and
maximal number of active rules by $L$ and $U$, respectively, and
assume $L < U$. This excludes the case $L=U$, in which the design
matrix $\mathbf{F}$ would not have full column rank and hence
$\bm{\beta}$ could not be estimated. The design region is then
specified by
\begin{equation*}
  \mathcal{X}
  = \{\bm{x}\in\{-1,+1\}^K \,|\, L \leq d(\bm{x}) \leq U\}
  = \{\bm{x}\,|\, 2L-K \leq
    \sum_{j=1}^{K}x_{j}
    \leq 2U-K\}\,.
\end{equation*}
The (unrestricted) full factorial design region would be given by
$L=0$ and $U=K$.

It is well-known (see e.g.~\citealp{Searle:1971}, p. 90, or
\citealp{Rao:1973}, p. 226), that for $\mathbf{F}$ with full column
rank $\bm{\beta}$ is estimable and the variance of the least squares
estimator is proportional to the inverse of the information matrix
\begin{equation*}
   \mathbf{F}^\top\mathbf{F} = \sum_{i=1}^{N}\bm{f}(\bm{x}_{i})\bm{f}(\bm{x}_{i})^\top\,.
\end{equation*}

To facilitate the search for optimal designs we will make use of
approximate design theory~\cite[see for example][]{Silvey:1980}.  In
this context an approximate design $\xi$ is defined by
\begin{equation*}
  \xi=\left\{
    \begin{matrix}
      \bm{x}_{1}&\hdots&\bm{x}_{n}\\
      w_{1}&\hdots&w_{n}
    \end{matrix}
  \right\}\,,
\end{equation*}
where $\bm{x}_{1},\hdots,\bm{x}_{n}\in\mathcal{X}$ are mutually
distinct settings with weights $w_{i} \geq 0$, $i = 1,\ldots, n$,
$\sum_{i=1}^{n}w_{i} = 1$.  Here the weights $w_i$ represent the
proportions $\xi(\bm{x}_i)$ of observations, which should be spent at
$\bm{x}$.

The corresponding (weighted) information matrix is defined as
\begin{equation*}
  \mathbf{M}(\xi)
  = \sum_{i=1}^{n}w_{i}\bm{f}(\bm{x}_{i})\bm{f}(\bm{x}_{i})^\top\,,
\end{equation*}
which is the average information per observation.  In the case of an
exact design $(\bm{x}_1,\ldots,\bm{x}_N)$ for $N$ observation the
weights $w_{i}$ equal $N_{i}/N$, where $N_{i}$ is the number of
replications at $\bm{x}_{i}$, and the weighted information matrix
equals $ N^{-1} \mathbf{F}^\top\mathbf{F} $.

A design $\xi^*$ on $\mathcal{X}$ is $D$-optimal if and only if it
maximises the determinant of the information matrix, i.e.\
\begin{equation*}
  \det(\mathbf{M}(\xi^*)) \geq \det(\mathbf{M}(\xi))
\end{equation*}
for all designs $\xi$ on $\mathcal{X}$.  Under the $D$-criterion
the volume of the confidence ellipsoid for $\bm{\beta}$ is minimised.

We will use invariance properties to reduce the complexity of the
optimisation problem. See for example~\cite{Pukelsheim:1993}
and~\cite{Schwabe:1996} for details and further references.  This
approach is also used in~\cite{FilovaHarmanKlein:2011a} to derive
results on $E$-optimal spring balance weighing designs.  In this
context the design region consists of the vertices of the $K$
dimensional unit cube $\{0,1\}^{K}$ and no restriction on the number
of active levels $1$ is considered.

The design region $\mathcal{X}$, considered as a subset of the
vertices of the hypercube $\{-1,+1\}^{K}$, is invariant under
permutations of the entries in the design points, i.e.\ permutations
of rules, which result in appropriate rotations of the hypercube.
Under the group of these permutations there are $U-L+1$ orbits, which
will be denoted by $\mathcal{O}_{k}$, $k=L,\ldots,U$.  The orbit
$\mathcal{O}_{k}=\{\bm{x}|d(\bm{x})=k\}$ consists of all items with
$k$ active rules or, equivalently, of the design points with $k$
entries equal to $+1$, i.e.\
\begin{equation*}
  \mathcal{O}_{k}=\{\bm{x}\in\mathcal{X} \,|\, \sum_{j=1}^{K}x_{j}=2k-K\}\,.
\end{equation*}
Note, that the orbits yield a partition of the design region
$\mathcal{X}$, i.e.\ they are mutually disjoint and
$\mathcal{X}=\bigcup_{k=L}^U{\mathcal{O}_{k}}$.  The present main
effects model is linearly equivariant with respect to permutations,
i.e.\ for each permutation $\mathbf{P}$ exists a matrix $\mathbf{Q}$
such that $\bm{f}(\mathbf{P}\bm{x}) = \mathbf{Q}\bm{f}(\bm{x})$
uniformly in $\bm{x}$.  A design $\bar{\xi}$, which remains unchanged,
if the support is transformed, here by permutation of rules, is called
invariant.  From the equivariance of the model and the invariance of
the design region follows that there exists an invariant $D$-optimal
design.

These invariant designs have uniform weights on each orbit, i.e.\ for
all $\bm{x}_{1},\bm{x}_{2}\in \mathcal{O}_{k}$ follows
$\bar{\xi}(\bm{x}_{1}) = \bar{\xi}(\bm{x}_{2})$.  Denote the uniform
design on the orbit $\mathcal{O}_k$ with $k$ rules by $\bar{\xi}_{k}$.
These are called vertex designs in~\cite{FilovaHarmanKlein:2011a}.
For the invariant design $\bar\xi_k$ on $\mathcal{O}_k$ the weights
$\xi_k(\bm{x})=1/{K \choose k}$ are determined as the reciprocal of the
number ${K \choose k}$ of design points in the orbit, and the
information matrix is given by
\begin{equation*}
  \mathbf{M}(\bar{\xi}_{k}) = {K \choose k}^{-1}
  \sum_{\bm{x}\in \mathcal{O}_{k}}\bm{f}(\bm{x})\bm{f}(\bm{x})^\top\,.
\end{equation*}

Every invariant design $\bar\xi$ can be written as a weighted sum of
vertex designs, $\bar\xi=\sum_{k=L}^{U}\bar{w}_k\bar\xi_k$ with
weights $\bar{w}_{k} \geq 0$, $\sum_{k=L}^{U}\bar{w}_{k} = 1$, for the
orbits.  Then the information matrix of an invariant design
$\bar{\xi}$ on $\mathcal{X}$ equals
\begin{equation*}
  \mathbf{M}(\bar{\xi})
  =\sum_{k=L}^{U}\bar{w}_{k}\mathbf{M}(\bar{\xi}_{k})\,.
\end{equation*}
Hence the optimisation can be confined to finding the optimal weights
$\bar{w}_{k}$.  Each invariant design can be characterised by the
orbits $\mathcal{O}_{k}$ and their corresponding weights
$\bar{w}_{k}$. Due to this fact we can use the notation
\begin{equation*}
  \bar{\xi}= \left\{\begin{matrix}
      k_{1}&\cdots&k_{n}\\
      \bar{w}_{1}&\cdots&\bar{w}_{n}
    \end{matrix}\right\}
\end{equation*}
$k_{i}\in\{L,\ldots,U\}$, $i=1,\ldots,n$, whenever an invariant design
on $n$ orbits is given explicitly, where only orbits with non-zero
weights $\bar{w}_k$ are specified.

In the particular case that the constraints are symmetric, $L+U=K$,
i.e.\ whenever items with $k$ active rules are allowed then so are
those with $K-k$ active rules, then invariance additionally is present
with respect to the sign change of the whole vector of the design
point, i.e.\ switching from $k$ active rules to $k$ inactive rules.
Consequently it follows in this case, that there is an invariant
$D$-optimal design with $\bar{w}_{k}=\bar{w}_{K-k}$.
\section{Structure of the Information Matrix}
\label{sec:StructureOfInfo}
The entries of the weighted information matrix are moments with
respect to the design $\xi$, with the general form
\begin{equation*}
  \sum_{i=1}^{n}w_{i}x_{ik}^{u}x_{i\ell}^{v}\,,
  \quad k, \ell \in\{1,\ldots,K\},\,
  u,v\in\{0,1\}.
\end{equation*}
For an invariant design $\bar{\xi}$ the moments reduce to only three
different values.  The diagonal, with the constant ($u=v=0$) and
second moments ($k=\ell$, $u=v=1$), is given by
\begin{equation*}
  \sum_{i=1}^{n}w_{i} = 1\quad\text{and}\quad\sum_{i=1}^{n}w_{i}x_{ik}^{2}  = 1,\quad k=1,\hdots,K\,.
\end{equation*}
The off-diagonal entries in the first row and first column, i.e.\ the
first moments ($u=0$, $v=1$ or vice versa), are identical
\begin{equation*}
  m_{1}(\bar{\xi})= \sum_{i=1}^{n}w_{i}x_{ik}\,,\quad k=1,\ldots,K\,,
\end{equation*}
while all other off-diagonal entries, i.e.\ the mixed moments
($k\neq\ell$, $u=v=1$), also coincide
\begin{equation*}
  m_{2}(\bar{\xi})=\sum_{i=1}^{n}w_{i}x_{ik}x_{i\ell}\,,\quad 1\leq k < \ell \leq K\,.
\end{equation*}
Denoting the $K$-dimensional vector of ones by $\bm{1}_{K}$ and the
$K\times K$ identity matrix by $\mathbf{I}_{K}$, the information
matrix becomes
\begin{equation}
  \label{eq:InfoMatrixStructure}
  \mathbf{M}(\bar{\xi}) = \begin{pmatrix}
    1 & m_{1}(\bar{\xi}) \bm{1}_{K}^\top\\
    m_{1}(\bar{\xi}) \bm{1}_{K} & \mathbf{M}_{22}(\bar{\xi})\\
  \end{pmatrix}
\end{equation}
with the submatrix
\begin{equation*}
  \mathbf{M}_{22}(\bar{\xi}) = (1 - m_{2}(\bar{\xi})) \mathbf{I}_{K} + m_{2}(\bar{\xi})
  \bm{1}_{K}\bm{1}_{K}^\top\,.
\end{equation*}
Even though $m_{1}(\bar{\xi})$, $m_{2}(\bar{\xi})$ and hence
$\mathbf{M}_{22}(\bar{\xi})$ depend on the design $\bar{\xi}$, we will
omit the argument for the sake of brevity, where it does not lead to
confusions.

As we have seen before, the information matrix of an invariant design
can be written as a weighted sum of the information matrices of the
orbits.  These matrices $\mathbf{M}(\bar{\xi}_{k})$ have the same
structure as in~\eqref{eq:InfoMatrixStructure}, with the moments
replaced by the moments of the $k$-th orbit,
\begin{equation}
  \label{eq:def:mbar1k}
  m_{1}(\bar{\xi}_{k})
  = \frac{2 k - K}{K}
\end{equation}
and
\begin{equation}
  \label{eq:def:mbar11k}
  m_{2}(\bar{\xi}_{k})
  = \frac{(2 k - K)^2  - K}{K(K - 1)}\,,
\end{equation}
which can be calculated by counting the number of summands equal to
$+1$ or $-1$.  Some properties following from~\eqref{eq:def:mbar1k}
and~\eqref{eq:def:mbar11k} we will need later are
\begin{align}
  \label{eq:PropOfmbar1k1}
  m_{1}(\bar{\xi}_{k}) \leq 0
  & \quad\mbox{if and only if}\quad k \leq \frac{K}{2}\\
  \intertext{and}
  \label{eq:PropOfmbar11k1}
  m_{2}(\bar{\xi}_{k}) \leq 0
  &\quad\mbox{if and only if}\quad \frac{K-\sqrt{K}}{2}\leq k \leq \frac{K+\sqrt{K}}{2}\,.
\end{align}
Equality holds on the left-hand side of~\eqref{eq:PropOfmbar1k1} if
and only if equality holds on the right-hand side.  Analogously
$m_{2}(\bar{\xi}_{k}) = 0$ in~\eqref{eq:PropOfmbar11k1} if and only if
equality holds for one of the relations on the right-hand side of the
condition.  Note, that due to symmetry
\begin{equation}
  \label{eq:PropOfmbark3}
  m_{1}(\bar{\xi}_{k}) = -m_{1}(\bar{\xi}_{K-k})
  \quad\text{and}\quad
  m_{2}(\bar{\xi}_{k}) = m_{2}(\bar{\xi}_{K-k})\,.
\end{equation}

For further calculations also note that
\begin{equation}
  \label{eq:mAsSumOfmbark}
  m_{1}=\sum_{k=L}^{U}\bar{w}_{k}m_{1}(\bar{\xi}_{k})
P  \quad\text{and}\quad
  m_{2}=\sum_{k=L}^{U}\bar{w}_{k}m_{2}(\bar{\xi}_{k})\,.
\end{equation}

For designs, which are also invariant with respect to sign change, it
follows that $\bar{w}_{k}=\bar{w}_{K-k}$ and hence $m_{1} = 0$.  This
can be easily seen from~\eqref{eq:mAsSumOfmbark}
and~\eqref{eq:PropOfmbark3}.  In fact $m_{1}=0$ holds for all
symmetric invariant designs, i.e.\ $\bar{w}_{k}=\bar{w}_{K-k}$, for
all $k=L,\ldots,U$.  In an invariant design with at least two orbits a
necessary condition for $m_{1}=0$ is, that there are
$k,\ell\in\{L,\ldots,U\}$, with $\bar{w}_{k}>0$ and
$\bar{w}_{\ell}>0$, such that $k<K/2<\ell$. This follows
from~\eqref{eq:PropOfmbar1k1}. Analogously follows
from~\eqref{eq:PropOfmbar11k1} that $m_{2}=0$ only if either
\begin{equation*}
  k\in \left(\frac{K-\sqrt{K}}{2},\frac{K+\sqrt{K}}{2}\right)
  \quad\text{and}\quad
  \ell\notin \left(\frac{K-\sqrt{K}}{2},\frac{K+\sqrt{K}}{2}\right)
\end{equation*}
or, if the boundaries of the given interval are integers, the design
consists of the two orbits corresponding to $(K-\sqrt{K})/2$ and
$(K+\sqrt{K})/2$ only.
\section{Invariant $D$-optimal Designs}
\label{sec:DoptDesign}
As stated in the previous section, we can confine the optimisation on
finding optimal weights
$\bar{w}_{L}, \bar{w}_{L+1}, \ldots, \bar{w}_{U}$.  The determinant of
the information matrix can be calculated using some standard results
on determinants:
\begin{align}
  \det(\mathbf{M}(\xi))
  &=\det\left(\mathbf{M}_{22} - m_{1}^{2} \bm{1}_{K}\bm{1}_{K}^\top\right)
    \notag\\
  &=\det\left(
    (1-m_{2})\mathbf{I}_{K}+(m_{2}- m_{1}^{2})\bm{1}_{K}\bm{1}_{K}^\top
    \right)
    \notag\\
  &= (1-m_{2})^{K-1}\left(1 + (K-1)m_{2} - Km_{1}^2\right)\,.
    \label{eq:DetOfM}
\end{align}
As a direct consequence conditions for the regularity of the
information matrix follow:
\begin{lemma}
  \label{lem:regularInfo}
  For an invariant design $\bar{\xi}$ the information matrix
  $\mathbf{M}(\bar{\xi})$ is regular if and only if there exist
  $k,\ell\in\{L,\ldots, U\}$, $k\neq \ell$, such that $\bar{w}_{k}>0$
  and $\bar{w}_{\ell}>0$, and, for $K\geq 2$, either $k$ or $\ell$ is
  strictly between $0$ and $K$.
\end{lemma}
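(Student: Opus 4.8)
The plan is to work directly from the determinant formula, which reduces the whole question to understanding when two scalar factors vanish.

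The plan is to read everything off the determinant formula~\eqref{eq:DetOfM}, since regularity of $\mathbf{M}(\bar\xi)$ means $\det(\mathbf{M}(\bar\xi))\neq 0$, which is equivalent to both factors $(1-m_2)^{K-1}$ and $1+(K-1)m_2-Km_1^2$ being nonzero. I would treat the two factors separately and then translate each nonvanishing condition into a statement about the orbit weights $\bar w_k$.

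For the first factor I would show that $m_2\le 1$ always holds, with equality characterised explicitly. From~\eqref{eq:def:mbar11k}, $m_2(\bar\xi_k)=1$ is equivalent to $(2k-K)^2=K^2$, i.e.\ to $k\in\{0,K\}$, while $m_2(\bar\xi_k)<1$ for every interior $k$. Since by~\eqref{eq:mAsSumOfmbark} the value $m_2=\sum_k\bar w_k m_2(\bar\xi_k)$ is a convex combination, $m_2=1$ holds precisely when all weight sits on the extreme orbits $k\in\{0,K\}$. Hence, for $K\ge 2$, $(1-m_2)^{K-1}\neq 0$ is equivalent to some interior orbit $0<k<K$ carrying positive weight. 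For $K=1$ this factor is the empty product $1$ and imposes no condition, which is why the interior clause is absent in that case.

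The crux is the second factor. The key observation is that~\eqref{eq:def:mbar1k} and~\eqref{eq:def:mbar11k} satisfy, for each single orbit, the identity $1+(K-1)m_2(\bar\xi_k)-Km_1(\bar\xi_k)^2=0$, equivalently $(K-1)m_2(\bar\xi_k)=Km_1(\bar\xi_k)^2-1$. Substituting this together with~\eqref{eq:mAsSumOfmbark} into the second factor and writing $a_k:=m_1(\bar\xi_k)=(2k-K)/K$, I would simplify $1+(K-1)m_2-Km_1^2$ to $K\bigl(\sum_k\bar w_k a_k^2-(\sum_k\bar w_k a_k)^2\bigr)$, i.e.\ to $K$ times the variance of $a_k$ under the weight distribution $(\bar w_k)$. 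This variance is nonnegative and vanishes exactly when all orbits with positive weight share the same value $a_k$; since $a_k=(2k-K)/K$ is strictly increasing in $k$, this happens if and only if the design is supported on a single orbit. Thus the second factor is nonzero if and only if at least two distinct orbits carry positive weight.

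Finally I would combine the two characterisations. For $K\ge 2$, $\det(\mathbf{M}(\bar\xi))\neq 0$ holds iff at least two orbits have positive weight and at least one orbit with positive weight is strictly interior; a short argument (pick any second positive-weight orbit to accompany an interior one, and conversely) shows this is equivalent to the existence of a pair $k\neq\ell$ with $\bar w_k,\bar w_\ell>0$, one of which lies strictly between $0$ and $K$, which is the stated condition. For $K=1$ only the second factor survives, giving regularity iff the two available orbits both have positive weight. The only genuinely non-routine step is the algebraic collapse of the second determinant factor into a variance; once that identity is in hand, both directions of the equivalence are immediate.
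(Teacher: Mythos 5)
Your proposal is correct and follows essentially the same route as the paper: both arguments read regularity off the factorisation~\eqref{eq:DetOfM} and characterise when each factor vanishes, your ``$K$ times the variance of $a_k$'' being exactly the paper's expression $\tfrac{4}{K}\sum_{k}\bar{w}_{k}\bigl(k-\sum_{\ell}\bar{w}_{\ell}\ell\bigr)^{2}$ for the second factor, and your convex-combination argument for $m_2\le 1$ matching the paper's identity $1-m_{2}=\tfrac{4}{K(K-1)}\sum_{k}\bar{w}_{k}k(K-k)$. The only cosmetic difference is that you derive these facts via the per-orbit identity $1+(K-1)m_{2}(\bar{\xi}_{k})-Km_{1}(\bar{\xi}_{k})^{2}=0$ rather than writing the sums out directly.
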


As we will see shortly, there are two different cases for optimal
invariant designs: Either $m_{1}=m_{2}=0$ or its complement.  The
first case holds if and only if $(K-2L)(2U-K)\geq K$.  The
corresponding information matrix is the identity matrix and hence the
designs are as efficient as the $2^K$ full factorial design. Even for
the unrestricted design region
$\{-1,+1\}^K$. Theorem~\ref{thm:DoptEffAsFactorial} shows the
corresponding result.

If on the other hand $(K-2L)(2U-K) < K$, invariant designs on the
boundary orbits $\mathcal{O}_{L}$ and $\mathcal{O}_{U}$ are
optimal. In fact $D$-optimal designs have to be concentrated on these
two orbits. This can happen if the interval $[L,U]$ is too narrow or
does not include $K/2$.  In those cases it follows that $m_{1}\neq 0$
or $m_{2}\neq 0$ from~\eqref{eq:PropOfmbar1k1}
and~\eqref{eq:PropOfmbar11k1}. (See Lemma~\ref{lem:m1m2not0} in the
Appendix)

The weight $\bar{w}_{L}^*$ in the following Theorem maximises the
determinant of the information matrix for invariant designs with
$\bar{w}_{U}=1-\bar{w}_{L}$.
\begin{theorem}
  \label{thm:Dopt2Orbit}
  Let $\bar{w}_{L}^*= 1/2$ if $L+U=K$ and 
  \begin{multline}
    \label{eq:weightDopt2Orbit}
    \bar{w}_{L}^*=\frac{\left(U-L\right)\left(L+U-K\right)K -2U(K-U)}%
    {2\left(U-L\right)\left(L+U-K\right)(K+1)}\\
    \qquad+\frac{\sqrt{\left(U-L\right)^2\left(L+U-K\right)^2K^2
        +4L(K-L)U(K-U)}}%
    {2\left(U-L\right)\left(L+U-K\right) (K+1)}
  \end{multline}
  otherwise.
  
  In the case
  \begin{equation}
    \label{eq:cond2Orbit}
    (K-2L)(2U-K) < K
  \end{equation}
  the invariant design on the orbits $\mathcal{O}_{L}$ and
  $\mathcal{O}_{U}$ with weights $\bar{w}_{L}^*$ and
  $\bar{w}_{U}^* = 1-\bar{w}_{L}^*$ is $D$-optimal.
\end{theorem}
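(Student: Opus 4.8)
The plan is to establish $D$-optimality over the entire class of designs on $\mathcal{X}$ by verifying the Kiefer--Wolfowitz equivalence condition for the candidate $\bar\xi^*$ supported on $\mathcal{O}_L,\mathcal{O}_U$ with weights $\bar w_L^*$ and $\bar w_U^*=1-\bar w_L^*$. Writing $\mathbf{M}^*=\mathbf{M}(\bar\xi^*)$, by \eqref{eq:InfoMatrixStructure} and \eqref{eq:mAsSumOfmbark} it has the invariant form with moments $m_1^*=\bar w_L^* m_1(\bar\xi_L)+\bar w_U^* m_1(\bar\xi_U)$ and $m_2^*$ defined analogously. Under \eqref{eq:cond2Orbit} together with $L<U$ the two supporting orbits are distinct and at least one of $L,U$ is strictly between $0$ and $K$ (the excluded case $L=0,U=K$ gives $(K-2L)(2U-K)=K^2\ge K$), so Lemma~\ref{lem:regularInfo} makes $\mathbf{M}^*$ regular; since a convex combination of the $m_2(\bar\xi_k)\le 1$ can equal $1$ only for a singular design, regularity forces $m_2^*<1$ through \eqref{eq:DetOfM}.

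Next I would compute the sensitivity function $\psi(\bm x)=\bm f(\bm x)^\top(\mathbf M^*)^{-1}\bm f(\bm x)$. Using block inversion and the inverse of $\alpha\mathbf I_K+\beta\bm 1_K\bm 1_K^\top$, the lower-right block of $(\mathbf M^*)^{-1}$ equals $(1-m_2^*)^{-1}\mathbf I_K-d\,\bm 1_K\bm 1_K^\top$ with $d=(m_2^*-(m_1^*)^2)/\bigl((1-m_2^*)(1+(K-1)m_2^*-K(m_1^*)^2)\bigr)$. Because $(\mathbf M^*)^{-1}$ inherits the invariant structure of \eqref{eq:InfoMatrixStructure}, $\psi$ depends on $\bm x$ only through $s=\bm 1_K^\top\bm x=2k-K$ (using $\bm x^\top\bm x=K$), so that $\psi(\bm x)=\psi_k=A+Bs-d\,s^2$ is a quadratic in $s$ with leading coefficient $-d$.

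The decisive point is that $\bar w_L^*$ is exactly the interior maximiser of $w\mapsto\det\mathbf{M}(w\bar\xi_L+(1-w)\bar\xi_U)$: since $\log\det$ is concave in $w$ and $\det\mathbf{M}$ vanishes at $w\in\{0,1\}$ (single-orbit designs are singular by Lemma~\ref{lem:regularInfo}), this maximiser is unique and interior, and \eqref{eq:weightDopt2Orbit} is the corresponding root. Now $\frac{d}{dw}\log\det\mathbf{M}=\mathrm{tr}\bigl((\mathbf M^*)^{-1}(\mathbf M(\bar\xi_L)-\mathbf M(\bar\xi_U))\bigr)=\psi_L-\psi_U$, so stationarity is equivalent to $\psi_L=\psi_U$; combined with the identity $\bar w_L^*\psi_L+\bar w_U^*\psi_U=\mathrm{tr}(\mathbf I_{K+1})=K+1$ this yields $\psi_L=\psi_U=K+1$. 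Thus $\psi_k$ attains $K+1$ at both endpoints $s_L=2L-K$ and $s_U=2U-K$, and if it is convex ($-d>0$) it stays $\le K+1$ for every intermediate $k\in\{L,\dots,U\}$, so the equivalence theorem delivers $D$-optimality over all designs on $\mathcal{X}$.

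The main obstacle is therefore the convexity, which is equivalent to $-d>0$, that is to $m_2^*<(m_1^*)^2$ (both denominator factors of $d$ being positive by $m_2^*<1$ and regularity via \eqref{eq:DetOfM}), and proving precisely this inequality in the regime \eqref{eq:cond2Orbit} is the core of the argument. Geometrically, the attainable moment pairs form the convex hull of the points $(m_1(\bar\xi_k),m_2(\bar\xi_k))$, which lie on the parabola $m_2=(Km_1^2-1)/(K-1)$; I would show that \eqref{eq:cond2Orbit} forces the chord joining the extreme points $P_L,P_U$ to lie below the curve $m_2=m_1^2$ at the maximiser, pushing the optimum of \eqref{eq:DetOfM} onto that chord instead of the interior value $m_1^*=m_2^*=0$ characterising the complementary case of Theorem~\ref{thm:DoptEffAsFactorial}. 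The symmetric case $L+U=K$ is immediate: there $\bar w_L^*=1/2$ gives $m_1^*=0$ by \eqref{eq:PropOfmbark3}, while \eqref{eq:cond2Orbit} reduces to $(U-L)^2<K$, whence $m_2^*=((U-L)^2-K)/(K(K-1))<0=(m_1^*)^2$. For the asymmetric case I expect the algebra linking \eqref{eq:weightDopt2Orbit} to the sign of $m_2^*-(m_1^*)^2$, and the confirmation that the stated root lies in $(0,1)$, to be the most delicate steps.
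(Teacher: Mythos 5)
Your overall strategy is a genuine alternative to the paper's: you fix the candidate two-orbit design and try to verify the Kiefer--Wolfowitz condition for it directly (sufficiency), whereas the paper argues in the opposite direction. The paper takes an arbitrary $D$-optimal invariant design (which exists by invariance), notes via Lemma~\ref{lem:sensitivitypoly} that its sensitivity is a quadratic $\tilde{\psi}(k)$, and rules out a non-positive leading coefficient: that case would force the support either onto a single orbit (singular, Lemma~\ref{lem:regularInfo}) or onto two adjacent levels with $\tilde{\psi}\leq p$ on all of $\{0,\ldots,K\}$, making $\mathbf{M}$ the identity and contradicting Lemma~\ref{lem:m1m2not0} under \eqref{eq:cond2Orbit}. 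Hence the leading coefficient is positive, the support must be $\{\mathcal{O}_L,\mathcal{O}_U\}$, and the problem collapses to a one-dimensional maximisation of \eqref{eq:DetOfM} in $\bar{w}_L$. The payoff of that route is that the convexity of the sensitivity polynomial at the optimum is obtained for free (it is forced by the contradiction argument), and never has to be checked against the explicit weight formula. Your preliminary steps are sound: the trace identities $\tfrac{d}{dw}\log\det\mathbf{M}=\psi_L-\psi_U$ and $\bar{w}_L^*\psi_L+\bar{w}_U^*\psi_U=p$ correctly reduce the equivalence condition at the support points to stationarity of the determinant, and your regularity check for $\mathbf{M}^*$ is right.

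The genuine gap is exactly the step you flag as the ``main obstacle'': the inequality $m_2^*<(m_1^*)^2$ (equivalently, positivity of the leading coefficient of $\tilde{\psi}$ at the candidate), which is the only thing standing between $\psi_L=\psi_U=p$ and the required bound $\tilde{\psi}(k)\leq p$ on the interior orbits $L<k<U$. You prove it only when $L+U=K$; in the asymmetric case you offer a geometric plan (chord versus the curve $m_2=m_1^2$) but no argument, and this is precisely where condition \eqref{eq:cond2Orbit} must enter --- without it the claim is false, since Theorem~\ref{thm:DoptEffAsFactorial} shows the complementary regime yields $m_1=m_2=0$ at the optimum. In the same vein, you never verify that the explicit expression \eqref{eq:weightDopt2Orbit} is in fact the root of the stationarity equation, nor that it lies in $(0,1)$ (which your regularity and trace arguments both presuppose). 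As written, the proposal establishes optimality only in the symmetric case and, elsewhere, reduces the theorem to an unproven algebraic inequality; to complete it along your lines you would either have to carry out that algebra, or switch to the paper's necessity argument, which sidesteps the inequality entirely.
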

The weight $\bar{w}_{L}^*$ simplifies considerably for $L=0$ and does
not depend on $U$. In this case $\bar{w}_{L}^* = 1/(K + 1)$. This is
exemplified in Table~\ref{tab:opt2orbit}. Because of symmetry follows
$\bar{w}_{L}^* = K / (K + 1)$, if $U = K$.

Another property of the weight $\bar{w}_{L}^*$ as a function of $U$,
which is visible in the table, is the symmetry around $K/2$. For fixed
$L$, $K$ and some constant $c>0$ the weight for $U = K/2 + c$ is the
same as for $U = K/2 - c$. Taking into account that for
optimality~\eqref{eq:cond2Orbit} and $L<U$ must be satisfied, this is
especially relevant for $U$ close to $K/2$, e.g.\ for $K$ odd and
$U=(K \pm 1) /2$.

For symmetric constraints the following result is immediate.
\begin{corollary}
  \label{cor:thm:Dopt2OrbitSym}
  Let $L + U = K$.
  If
  \begin{equation*}
    \frac{K-\sqrt{K}}{2}<L
  \end{equation*}
  then the invariant design with $\bar{w}_{L}=\bar{w}_{U}=1/2$ is
  $D$-optimal.
\end{corollary}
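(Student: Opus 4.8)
The plan is to specialize Theorem~\ref{thm:Dopt2Orbit} to the symmetric situation $L+U=K$ and to show that the hypothesis of the corollary is exactly what is needed for condition~\eqref{eq:cond2Orbit} to hold. Under $L+U=K$ the theorem already prescribes the optimal weight $\bar{w}_L^*=1/2$ (its symmetric branch), so the only thing left to verify is that we are in the regime where the two-orbit design is optimal, i.e.\ that the strict inequality~\eqref{eq:cond2Orbit} is satisfied. The corollary is then a direct invocation of the theorem.

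First I would substitute $U=K-L$ into the left-hand side of~\eqref{eq:cond2Orbit}. Since $2U-K=2(K-L)-K=K-2L$, the product factorises as $(K-2L)(2U-K)=(K-2L)^2$, so the condition collapses to $(K-2L)^2<K$, equivalently $|K-2L|<\sqrt{K}$.

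Next I would use the standing assumption $L<U$ carried over from the theorem. Together with $L+U=K$ this yields $2L<L+U=K$, hence $K-2L>0$. The absolute value can therefore be dropped, and $(K-2L)^2<K$ becomes $K-2L<\sqrt{K}$, i.e.\ $L>(K-\sqrt{K})/2$, which is precisely the hypothesis of the corollary. The complementary bound $L<(K+\sqrt{K})/2$ needed for $|K-2L|<\sqrt{K}$ is automatic, since $L<K/2<(K+\sqrt{K})/2$.

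The final step is purely an appeal to Theorem~\ref{thm:Dopt2Orbit}: with~\eqref{eq:cond2Orbit} verified and $\bar{w}_L^*=1/2$ read off from the symmetric case, the invariant design placing weight $1/2$ on each of $\mathcal{O}_L$ and $\mathcal{O}_U$ is $D$-optimal. There is no genuine obstacle here, in line with the remark that the result is immediate; the only point requiring a moment's care is the sign of $K-2L$, which is where the assumption $L<U$ enters and lets the square-root inequality be resolved without splitting into cases.
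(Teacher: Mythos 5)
Your proof is correct and follows exactly the route the paper intends: the corollary is stated as an immediate specialization of Theorem~\ref{thm:Dopt2Orbit}, and your verification that under $L+U=K$ condition~\eqref{eq:cond2Orbit} reduces to $(K-2L)^2<K$, hence to $L>(K-\sqrt{K})/2$ via $K-2L>0$ from $L<U$, is precisely the omitted computation. Nothing is missing.
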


The next result is concerned with designs on a sufficiently wide range
of orbits.

\begin{theorem}
  \label{thm:DoptEffAsFactorial}
  In the case
  \begin{equation}
    \label{eq:condEffAsFact}
    (K-2L)(2U-K) \geq K
    \,.
  \end{equation}
  an invariant design is $D$-optimal if and only if $m_{1}=m_{2}=0$.
\end{theorem}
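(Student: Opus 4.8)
The plan is to establish the sharp upper bound $\det(\mathbf{M}(\xi))\le 1$ valid for \emph{every} design, to characterise when equality holds, and then to show that under~\eqref{eq:condEffAsFact} the value $1$ is actually attained by an invariant design. For the upper bound I would invoke Hadamard's inequality. Since $\bm{f}(\bm{x})=(1,x_{1},\ldots,x_{K})^\top$ has $x_{j}^{2}=1$ on $\mathcal{X}$, every diagonal entry of $\mathbf{M}(\xi)=\sum_{i}w_{i}\bm{f}(\bm{x}_{i})\bm{f}(\bm{x}_{i})^\top$ equals $\sum_{i}w_{i}=1$. As $\mathbf{M}(\xi)$ is positive semidefinite, Hadamard's inequality gives $\det(\mathbf{M}(\xi))\le 1$, with equality if and only if $\mathbf{M}(\xi)=\mathbf{I}_{p}$. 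Comparing this with the structure~\eqref{eq:InfoMatrixStructure}, an invariant design satisfies $\mathbf{M}(\bar{\xi})=\mathbf{I}_{p}$ precisely when the off-diagonal first row vanishes ($m_{1}=0$) and the off-diagonal block vanishes ($m_{2}=0$). The same bound can be read off directly from~\eqref{eq:DetOfM}, since $1+(K-1)m_{2}-Km_{1}^{2}\le 1+(K-1)m_{2}$ and the map $m_{2}\mapsto(1-m_{2})^{K-1}(1+(K-1)m_{2})$ attains its maximum value $1$ at $m_{2}=0$.

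It remains to exhibit an invariant design with $m_{1}=m_{2}=0$, which is the crux of the argument. Writing $a=K-2L$ and $b=2U-K$, condition~\eqref{eq:condEffAsFact} reads $ab\ge K$; since $ab>0$ forces $a,b$ to share a sign and both being negative would give $U<K/2<L$, contradicting $L<U$, we conclude $a,b>0$, i.e.\ $L<K/2<U$. Using~\eqref{eq:def:mbar1k}, \eqref{eq:def:mbar11k} and~\eqref{eq:mAsSumOfmbark}, the two-orbit design on $\mathcal{O}_{L},\mathcal{O}_{U}$ with weights $\bar{w}_{L}=b/(a+b)$, $\bar{w}_{U}=a/(a+b)$ satisfies $m_{1}=0$ and $\sum_{k}\bar{w}_{k}(2k-K)^{2}=ab$, so that $m_{2}=(ab-K)/(K(K-1))\ge 0$. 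To remove this excess I would mix it with a central invariant design that already has $m_{1}=0$: the orbit $\mathcal{O}_{K/2}$ when $K$ is even (for which $\sum_{k}\bar{w}_{k}(2k-K)^{2}=0$), or the equal-weight design on $\mathcal{O}_{(K-1)/2}$ and $\mathcal{O}_{(K+1)/2}$ when $K$ is odd (for which this quantity equals $1$); in both cases $L<K/2<U$ guarantees the required orbits lie in $\{L,\ldots,U\}$. Taking the convex combination with weight $\lambda$ on the two-orbit design preserves $m_{1}=0$ and makes $\sum_{k}\bar{w}_{k}(2k-K)^{2}$ equal to $\lambda\,ab$ (for $K$ even) or $\lambda\,ab+(1-\lambda)$ (for $K$ odd); setting this equal to $K$ yields $\lambda=K/(ab)$ or $\lambda=(K-1)/(ab-1)$, and both lie in $(0,1]$ exactly because $ab\ge K$. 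The resulting invariant design has $m_{1}=m_{2}=0$.

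Combining the two parts concludes the proof: the construction shows the maximal determinant over all designs equals $1$ and is attained, so by the first paragraph an invariant design is $D$-optimal if and only if $\det(\mathbf{M}(\bar{\xi}))=1$, which by the Hadamard equality case holds if and only if $m_{1}=m_{2}=0$. The routine verifications---the Hadamard equality characterisation, the two-orbit moment computation, and the check that $\lambda\in(0,1]$---are straightforward; the only genuinely delicate point is the existence construction of the second paragraph, where the parity case distinction on $K$ and the sharp use of $ab\ge K$ are what guarantee that the balancing weight $\lambda$ is admissible.
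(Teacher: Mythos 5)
Your proof is correct, and while it shares the paper's overall skeleton---first show that $m_{1}=m_{2}=0$ characterises the global optimum, then exhibit an invariant design attaining it under~\eqref{eq:condEffAsFact}---both steps are executed differently. For the upper bound the paper simply notes that a design with $\mathbf{M}(\bar{\xi})=\mathbf{I}_{p}$ is optimal on the unrestricted region $\{-1,+1\}^{K}$ and hence on $\mathcal{X}$; your Hadamard argument (all diagonal entries equal $1$, so $\det(\mathbf{M}(\xi))\le 1$ with equality iff $\mathbf{M}(\xi)=\mathbf{I}_{p}$) is more self-contained and makes the ``only if'' direction completely explicit, which the paper leaves somewhat terse. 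For the existence step the paper writes down the three-orbit design~\eqref{eq:optDesign3Orbits} with the closed-form weights~\eqref{eq:optWeights3Orbit} (choosing the interior orbit $\ell=K/2$ or $(K\pm 1)/2$ via the auxiliary consequence $L<(K-\sqrt{K})/2$ or $U>(K+\sqrt{K})/2$ of~\eqref{eq:condEffAsFact}) and verifies $m_{1}=m_{2}=0$ by direct algebra; you instead balance the boundary orbits to kill $m_{1}$, observe that the resulting second moment $ab=(K-2L)(2U-K)$ overshoots $K$, and dilute with a central design to bring it down, so that the admissibility of the mixing weight $\lambda$ is \emph{exactly} the condition $ab\ge K$. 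Your construction is arguably more illuminating about why~\eqref{eq:condEffAsFact} is the right threshold (and for odd $K$ it produces a four-orbit design rather than the paper's three-orbit one, which is fine since the optimum is not unique). Two trivial points to tidy up: the degenerate case $K=1$, where $m_{2}$ is vacuous and your formula $\lambda=(K-1)/(ab-1)$ reads $0/0$ (the paper dismisses $K=1$ separately, and your two-orbit design already suffices there); and you should state explicitly that the Hadamard equality case for a positive semidefinite matrix with positive diagonal forces the matrix to be diagonal, hence here the identity.
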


For symmetric regions the result again simplifies:

\begin{corollary}
  \label{cor:thm:DoptEffAsFactorialSym}
  Let $L + U = K$.
  If
  \begin{equation}
    \label{eq:conditionCor1}
    L \leq \frac{K-\sqrt{K}}{2}
  \end{equation}
  then an invariant design is $D$-optimal if and only if $m_{2}=0$.
\end{corollary}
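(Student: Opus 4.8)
The plan is to show that, under the symmetry assumption $L+U=K$, the hypothesis~\eqref{eq:conditionCor1} is merely a restatement of the hypothesis~\eqref{eq:condEffAsFact} of Theorem~\ref{thm:DoptEffAsFactorial}, and then to discharge the condition $m_1=0$ by exploiting the sign-change invariance that is available in the symmetric case. First I would substitute $U=K-L$ into the left-hand side of~\eqref{eq:condEffAsFact}. Since $2U-K=K-2L$, the product factorises as $(K-2L)(2U-K)=(K-2L)^2$, so~\eqref{eq:condEffAsFact} reads $(K-2L)^2\geq K$. The constraint $L<U$ forces $2L<K$, hence $K-2L>0$, and taking square roots yields $K-2L\geq\sqrt{K}$, which rearranges to $L\leq(K-\sqrt{K})/2$. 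Thus~\eqref{eq:conditionCor1} holds if and only if~\eqref{eq:condEffAsFact} holds, so Theorem~\ref{thm:DoptEffAsFactorial} is applicable and characterises $D$-optimality by $m_1=m_2=0$.

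It then remains to replace the pair $m_1=m_2=0$ by the single condition $m_2=0$. The implication ``$D$-optimal $\Rightarrow m_2=0$'' is immediate from Theorem~\ref{thm:DoptEffAsFactorial}. For the converse I would use the reduction established for symmetric constraints: there exists a $D$-optimal invariant design with $\bar{w}_k=\bar{w}_{K-k}$, and every such symmetric design satisfies $m_1=0$, as follows from~\eqref{eq:mAsSumOfmbark} combined with the antisymmetry $m_1(\bar\xi_k)=-m_1(\bar\xi_{K-k})$ recorded in~\eqref{eq:PropOfmbark3}. Within this symmetric family the condition $m_1=0$ is automatic, so the requirement $m_1=m_2=0$ collapses to $m_2=0$, and a symmetric design with $m_2=0$ therefore attains the value of Theorem~\ref{thm:DoptEffAsFactorial} and is $D$-optimal.

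The step I expect to require the most care is the removal of the condition $m_1=0$, and I would support it directly from the determinant formula~\eqref{eq:DetOfM}. For fixed $m_2$ the factor $1+(K-1)m_2-Km_1^2$ is maximised at $m_1=0$, so any $D$-optimal design must have $m_1=0$; this gives the forward direction cleanly and shows that the optimum is indeed carried by the symmetric designs to which we restrict. I would make explicit that the equivalence is understood within this symmetric (sign-change invariant) class, since a vertex design on a single orbit $\mathcal{O}_k$ with $m_2(\bar\xi_k)=0$ may have $m_1(\bar\xi_k)\neq0$ and is then singular and non-optimal; flagging this caveat is what the last part of the argument must handle carefully.
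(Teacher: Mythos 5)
Your proof is correct and follows the route the paper intends (the paper offers no explicit proof, presenting the corollary as an immediate simplification of Theorem~\ref{thm:DoptEffAsFactorial}): substituting $U=K-L$ turns \eqref{eq:condEffAsFact} into $(K-2L)^2\geq K$, which with $K-2L>0$ is exactly \eqref{eq:conditionCor1}, and sign-change invariance forces $m_{1}=0$, so the characterisation $m_{1}=m_{2}=0$ collapses to $m_{2}=0$. Your closing caveat is a genuine and worthwhile observation rather than mere pedantry: read literally over all permutation-invariant designs the ``if'' direction fails --- e.g.\ for $K=4$, $L=1$, $U=3$ the single-orbit design $\bar{\xi}_{1}$ has $m_{2}(\bar{\xi}_{1})=0$ but $m_{1}(\bar{\xi}_{1})=-1/2$ and a singular information matrix --- so the equivalence must indeed be understood within the sign-change invariant (symmetric) class, exactly as you flag.
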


In the situation of Theorem \ref{thm:DoptEffAsFactorial} and Corollary
\ref{cor:thm:DoptEffAsFactorialSym} the information matrix of the
optimal design is the $p\times p$ identity matrix and coincides with
the information matrix of the $2^K$ factorial on the unrestricted
design region.

In the proof of Theorem~\ref{thm:DoptEffAsFactorial} given in the
appendix we show that exemplary designs $\bar{\xi}^*$ of the form
\begin{equation}
  \label{eq:optDesign3Orbits}
  \bar{\xi}^*=
  \left\{\begin{matrix}
      L&\ell&U\\
      \bar{w}_{L}^*&1-\bar{w}_{L}^*-\bar{w}_{U}^*&\bar{w}_{U}^*
    \end{matrix}\right\}
\end{equation}
fulfil the conditions of the theorem.  If $K$ is even $\ell= K/2$ may
be chosen for the interior orbit.  If $K$ is odd the choice depends on
$L$ and $U$, too.  If $L<(K-\sqrt{K})/2$ choose $\ell = (K-1)/2$.  If
$U>(K+\sqrt{K})/2$ choose $\ell = (K+1)/2$.  If both conditions are
met, we can choose any of the two given values.  These choices ensure,
that $L<\ell<U$.

On the boundary orbits the weights are
\begin{equation}
  \label{eq:optWeights3Orbit}
  \bar{w}_{L}^*=\frac{K+(2\ell-K)(2U-K)}{4(\ell-L)(U-L)}
  \quad\text{and}\quad
  \bar{w}_{U}^*=\frac{K+(2L-K)(2\ell-K)}{4(U-\ell)(U-L)}\,.
\end{equation}
Condition~\eqref{eq:condEffAsFact} guarantees, that the weight of the
interior orbit is non-negative. If equality holds
in~\eqref{eq:condEffAsFact}, then the middle weight is $0$, and a two
orbit design on $\mathcal{O}_{L}$ and $\mathcal{O}_{U}$ with weights
\begin{equation*}
  \bar{w}_{L}^*=  \frac{2U-K}{2(U-L)}
  \quad\text{and}\quad
  \bar{w}_{U}^*=  \frac{K-2L}{2(U-L)}
\end{equation*}
is optimal. Examples for these designs are given in
Table~\ref{tab:opt3orbit}.

Note that under the conditions of Theorem~\ref{thm:DoptEffAsFactorial}
the optimal design is not necessarily unique.  The weights for a
general optimal three orbit design with orbits
$\mathcal{O}_{\tilde{L}}$, $\mathcal{O}_{\tilde{\ell}}$ and
$\mathcal{O}_{\tilde{U}}$,
$L\leq \tilde{L} <\tilde{\ell}<\tilde{U}\leq U$ can be calculated by
substituting $L$ and $U$ with $\tilde{L}$ and $\tilde{U}$,
respectively, in~\eqref{eq:optWeights3Orbit}.
Condition~\eqref{eq:condEffAsFact} of
Theorem~\ref{thm:DoptEffAsFactorial} is replaced by
\begin{align*}
  (K-2\tilde{L})(2\tilde{U}-K) &\geq K\\
  (2\tilde{\ell}-K)(2\tilde{U}-K) &\geq -K\\
  (2\tilde{L}-K)(2\tilde{\ell}-K) &\geq - K\,.
\end{align*}
Again these conditions ensure, that the weights are non-negative.

An example for a symmetric optimal design, still under the conditions
of Theorem~\ref{thm:DoptEffAsFactorial}, is given by
\begin{equation*}
  \left\{
    \begin{matrix}
      k_{1}^*&k_{2}^*&K-k_{2}^*&K-k_{1}^*\\
      \bar{w}_{1}^*&\bar{w}_{2}^*&\bar{w}_{2}^*&\bar{w}_{1}^*
    \end{matrix}
  \right\}
\end{equation*}
with weights
\begin{equation*}
  \bar{w}_{1}^* = \frac{K-(K-2k_{2}^*)^2}{8(k_{2}^*-k_{1}^*)(K-k_{1}^*-k_{2}^*)}
  \quad\text{and}\quad
  \bar{w}_{2}^*=\frac{1-2\bar{w}_{1}^*}{2}
\end{equation*}
for any  $k_{1}^*, k_{2}^*$ satisfying
\begin{equation*}
  L\leq k_{1}^* < \frac{K-\sqrt{K}}{2} \leq k_{2}^* \leq \frac{K}{2}
  \quad\text{and}\quad
  K-k_{1}^*\leq U\,. 
\end{equation*}
If $k_{2}^* = (K-\sqrt{K})/2$ the weight $\bar{w}_{1}^*= 0$ and the
design reduces to a symmetric design on two orbits
$\mathcal{O}_{k_2^*}$ and $\mathcal{O}_{K-k_2^*}$ with
$\bar{w}_{2}^* = 1/2$. For $k_{2}^* = K/2$ it becomes a symmetric
three-orbit design.

Note especially, that designs with symmetric orbits can be optimal in
the case of asymmetric restrictions on the design region and vice
versa.  See for example Table~\ref{tab:opt4orbit}.

\section{Concluding Remarks}
It is noteworthy, that the above mentioned three and four orbit
designs have rational weights and hence can be implemented in practice
quite easily. For the two orbit designs from
Theorem~\ref{thm:Dopt2Orbit} this is not always the case.  An
affirmative example for $K=6$ rules and bounds $L=2$, $U=4$ with $30$
observations is given in Table~\ref{tab:desmatopt}.

The optimal designs in Theorem~\ref{thm:DoptEffAsFactorial} result in
an information matrix equal to the identity and are, hence, as
efficient as the full factorial design. Therefore they are also
optimal for other optimality criteria like the $A$-criterion for
minimising the average variance of the parameter estimates, the
$E$-criterion of maximising the smallest eigenvalue of the information
matrix, or the general class of Kiefer's $\Phi_q$-criteria based on
the eigenvalues of the information matrix \citep[see
e.g.][]{Pukelsheim:1993}.

In some particular cases of wide margins the optimal designs turn out
to be regular fractional factorial designs.  Consider for example
$K=4$, $L=1$ and $U=3$. An optimal design is given by the orbits
$\mathcal{O}_{1}$ and $\mathcal{O}_{3}$ with $\bar{w}_{1} = 1/2$ which
form an $2^{4-1}$ fractional factorial design.  In general orthogonal
arrays can occur. The symmetric four orbit design in the case $K=5$
with $k_{1}^*=1$ and $k_{2}^*=2$ is given by the first columns of an
$OA(40, 2^{20})$ \citep{SloanWeb}. While orthogonal arrays appear
naturally in these cases, further studies are necessary to explore the
specific relationship.  Further work has also to be done to generalise
the present results to models incorporating interactions between the
rules.

\section*{Acknowledgement}
This work was partly supported under DFG grant HO 1286/6-4 and SCHW
531/15-4, while the first author was affiliated with the University of
Magdeburg.

\section*{Appendix A: Proofs}
\begin{proof}[Proof of Lemma~\ref{lem:regularInfo}]
  The information matrix is regular if and only if its determinant is
  positive. Since the factors of the determinant in~\eqref{eq:DetOfM}
  are non-negative, this is equivalent to both of the factors being
  positive.  Consider
  \begin{equation*}
    1 + (K-1)m_{2} - Km_{1}^2
    = \frac{4}{K}\sum_{k=L}^{U}\bar{w}_{k}\left(k
      - \left(\sum_{\ell=L}^{U}\bar{w}_{\ell}\ell\right)\right)^2\geq 0\,,
  \end{equation*}
  which is $0$ if and only if $\bar{w}_{k}=1$ for some
  $k\in\{L,\ldots,U\}$.  Hence there have to be at least two orbits
  with positive weight.
  
  For $K\geq 2$ note that 
  \begin{equation*}
    1-m_{2}
    = \frac{4}{K(K-1)}\sum_{k=L}^{U}\bar{w}_{k}k(K-k) \geq 0\,.
  \end{equation*}
  This expression is equal to $0$ if and only if $\bar{w}_{k}=0$ for
  all $k\in\{1,\ldots,K-1\}\cap\{L\ldots,U\}$.  Hence the Lemma
  follows.
\end{proof}
Before we give the proof for Theorem~\ref{thm:Dopt2Orbit} we introduce
the following auxiliary result:
\begin{lemma}
  \label{lem:sensitivitypoly}
  Let $\bar{\xi}$ be an invariant design.  Then the sensitivity
  function
  $\psi(\bm{x})=\bm{f}(\bm{x})^{\top}\mathbf{M}(\bar{\xi})^{-1}\bm{f}(\bm{x})$
  is constant on the orbits, $\psi(\bm{x})=\tilde{\psi}(k)$ for
  $\bm{x}\in\mathcal{O}_k$, say, and the function $\tilde{\psi}$ is a
  polynomial in $k$ of degree at most $2$.
\end{lemma}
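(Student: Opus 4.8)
The plan is to exploit the highly symmetric bordered form of $\mathbf{M}(\bar\xi)$ recorded in~\eqref{eq:InfoMatrixStructure} and then to evaluate the quadratic form $\psi$ directly. Throughout I would restrict to designs whose information matrix is regular, since otherwise $\psi$ is not defined; by Lemma~\ref{lem:regularInfo} the inverse $\mathbf{M}(\bar\xi)^{-1}$ then exists.

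First I would pin down the structure of $\mathbf{M}(\bar\xi)^{-1}$. Because $\mathbf{M}(\bar\xi)$ has the bordered form in~\eqref{eq:InfoMatrixStructure}, it commutes with every permutation matrix of the form $\mathrm{diag}(1,\mathbf{P})$, and hence so does its inverse. Since the only symmetric matrices commuting with the whole permutation group have this invariant pattern, there must be scalars $a,b,c,d$, depending only on $m_1$, $m_2$ and $K$, with
\begin{equation*}
  \mathbf{M}(\bar\xi)^{-1}
  = \begin{pmatrix}
      a & b\,\bm{1}_K^\top\\
      b\,\bm{1}_K & c\,\mathbf{I}_K + d\,\bm{1}_K\bm{1}_K^\top
    \end{pmatrix}.
\end{equation*}
The explicit values of $a,b,c,d$ follow from the block-inversion formula but will not be needed.

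Next I would evaluate $\psi$ using $\bm{f}(\bm{x})=(1,\bm{x}^\top)^\top$ together with the two identities $\bm{x}^\top\bm{x}=K$ and $\bm{1}_K^\top\bm{x}=\sum_{j=1}^{K}x_j$, both valid on $\{-1,+1\}^K$. A short block multiplication gives
\begin{equation*}
  \psi(\bm{x})
  = a + cK + 2b\,\bigl(\bm{1}_K^\top\bm{x}\bigr)
    + d\,\bigl(\bm{1}_K^\top\bm{x}\bigr)^2,
\end{equation*}
so $\psi(\bm{x})$ depends on $\bm{x}$ only through the linear statistic $\bm{1}_K^\top\bm{x}$. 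For $\bm{x}\in\mathcal{O}_k$ this statistic equals the constant $2k-K$, which shows at once that $\psi$ is constant on each orbit; writing $\tilde\psi(k)$ for the common value and substituting $\bm{1}_K^\top\bm{x}=2k-K$ yields
\begin{equation*}
  \tilde\psi(k)
  = a + cK + 2b\,(2k-K) + d\,(2k-K)^2,
\end{equation*}
which is a polynomial in $k$ of degree at most $2$.

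The only step demanding genuine care is the assertion that $\mathbf{M}(\bar\xi)^{-1}$ reproduces the invariant pattern of $\mathbf{M}(\bar\xi)$; everything afterwards is a routine expansion. I would justify it either by computing the block inverse explicitly or, more conceptually, by the commutation argument sketched above, noting that $\mathbf{M}(\bar\xi)^{-1}$ is symmetric and that the commutant of the permutation representation, intersected with the symmetric matrices, is exactly four-dimensional and spanned by the blocks appearing in the displayed inverse.
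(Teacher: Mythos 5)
Your proposal is correct and follows essentially the same route as the paper: both reduce $\psi(\bm{x})$ to a quadratic in the single statistic $\bm{1}_K^\top\bm{x}$ (using $\bm{x}^\top\bm{x}=K$) and then substitute $\bm{1}_K^\top\bm{x}=2k-K$ on $\mathcal{O}_k$. The only cosmetic difference is that the paper obtains the bordered form of $\mathbf{M}(\bar{\xi})^{-1}$ by explicit block inversion and records the coefficients, whereas you infer the same pattern abstractly from the commutant of the permutation representation, which is a perfectly valid shortcut.
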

\begin{proof}[Proof of Lemma~\ref{lem:sensitivitypoly}]
  In our case the sensitivity $\psi$ is given by
  \begin{equation}
    \label{eq:sensitivity}
    \psi(\bm{x})
    = a_{0} +a_{1} \bm{1}_{K}^\top\bm{x}+a_{2} (\bm{1}_{K}^\top\bm{x})^{2}
  \end{equation}
  with the following coefficients:
  \begin{align*}
    a_{0}&=\frac{1+(K-1)m_{2}}{1+(K-1)m_{2} - Km_{1}^2}
           +\frac{K}{1-m_{2}}\,,
    &a_{1}&=- \frac{2m_{1}}{1+(K-1)m_{2} - Km_{1}^2}\,,\\
    a_{2}&=\frac{m_{1}^2-m_{2}}{(1-m_{2})(1+(K-1)m_{2} - Km_{1}^2)}\,.
  \end{align*}
  Since $\bm{1}_{K}^\top\bm{x}=2k-K$ for $\bm{x}\in \mathcal{O}_{k}$,
  the sensitivity function $\psi$ is constant on the orbits and
  $\tilde{\psi}(k)= a_0+a_1(2k-K)+a_2(2k-K)^2$, for
  $\bm{x}\in\mathcal{O}_{k}$, is a polynomial of degree at most~$2$.
\end{proof}
\begin{lemma}
  \label{lem:m1m2not0}
  Let $(K-2L)(2U-K) < K$, then $m_{1} \neq 0$ or $m_{2} \neq 0$ for
  every invariant design $\bar{\xi}$.
\end{lemma}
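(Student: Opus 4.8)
The plan is to prove the contrapositive: I will show that any invariant design with $m_{1}=0$ \emph{and} $m_{2}=0$ forces $(K-2L)(2U-K)\geq K$, which is exactly the negation of the hypothesis. Hence, whenever $(K-2L)(2U-K)<K$, at least one of $m_{1},m_{2}$ must be nonzero, as claimed.

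First I would translate the two moment conditions into statements about the first and second moments of the integer quantity $2k-K$ under the orbit weights $\bar{w}_{k}$. Combining \eqref{eq:mAsSumOfmbark} with \eqref{eq:def:mbar1k} and \eqref{eq:def:mbar11k}, the assumption $m_{1}=0$ is equivalent to $\sum_{k=L}^{U}\bar{w}_{k}(2k-K)=0$, and (using $\sum_{k}\bar{w}_{k}=1$) the assumption $m_{2}=0$ is equivalent to $\sum_{k=L}^{U}\bar{w}_{k}(2k-K)^{2}=K$. Thus the whole problem reduces to a moment question: can a distribution supported on the integers $2k-K$, $k\in\{L,\ldots,U\}$, simultaneously have mean $0$ and second moment exactly $K$?

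The key step is a boundary inequality. Every support point satisfies $2L-K\leq 2k-K\leq 2U-K$, so the quadratic $\bigl((2k-K)-(2L-K)\bigr)\bigl((2U-K)-(2k-K)\bigr)$ is nonnegative for each $k$ with $\bar{w}_{k}>0$. Expanding and averaging against the weights gives
\[
  \sum_{k=L}^{U}\bar{w}_{k}(2k-K)^{2}
  \leq \bigl((2L-K)+(2U-K)\bigr)\sum_{k=L}^{U}\bar{w}_{k}(2k-K)-(2L-K)(2U-K).
\]
When $m_{1}=0$ the middle term vanishes, leaving $\sum_{k}\bar{w}_{k}(2k-K)^{2}\leq -(2L-K)(2U-K)=(K-2L)(2U-K)$. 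Substituting $m_{2}=0$, which makes the left-hand side equal to $K$, yields $K\leq(K-2L)(2U-K)$, the desired contradiction.

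I expect the only real subtlety — the hard part — to be spotting the correct nonnegative quadratic $\bigl((2k-K)-(2L-K)\bigr)\bigl((2U-K)-(2k-K)\bigr)$; this is the discrete analogue of the Bhatia--Davis bound on the variance of a bounded random variable, and once it is written down the remainder is a one-line expansion. Everything else — the translation of $m_{1}$ and $m_{2}$ into raw moments and the final substitution — is routine algebra that I would not belabour.
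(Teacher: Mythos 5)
Your argument is correct and is essentially the paper's own proof: both reduce $m_{1}=m_{2}=0$ to the statement that a zero-mean distribution on $\{2L-K,\ldots,2U-K\}$ has second moment $K$, and both refute this via the variance bound $(K-2L)(2U-K)$. The only difference is that the paper cites the Bhatia--Davis inequality while you derive it directly from the nonnegativity of $\bigl((2k-K)-(2L-K)\bigr)\bigl((2U-K)-(2k-K)\bigr)$, which makes your version self-contained but otherwise identical in substance.
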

\begin{proof}[Proof of Lemma~\ref{lem:m1m2not0}]
  We prove the lemma by contradiction.
 
  Let $m_{1}=m_{2}=0$, then $\sum_{k=L}^{U}\bar{w}_{k}(2k-K) =0$
  and
  \begin{equation}
    \label{eq:2}
    \sum_{k=L}^{U}\bar{w}_{k}(2k-K)^2 = K\,.
  \end{equation}
  These sums can be seen as the mean and variance of a
  discrete zero-mean random variable taking values in
  $\{2L-K,\ldots,2U-K\}$.  Using the inequality in
  \citet{BhatiaDavis:2000} the variance is bounded above:
  \begin{equation*}
    \sum_{k=L}^{U}\bar{w}_{k}(2k-K)^2
    \leq (K-2L)(2U-K)\,.
  \end{equation*}
  Since $(K-2L)(2U-K) < K$ this is a contradiction to
  equation~\eqref{eq:2}.  
\end{proof}
\begin{proof}[Proof of Theorem~\ref{thm:Dopt2Orbit}]
  We will show first, that the optimal design has to be concentrated
  on the two boundary orbits.
  
  For an optimal design $\xi^*$ the equivalence
  theorem~\citep{KieferWolfowitz:1960} yields
  \begin{equation}
    \label{eq:DEquivalence}
    \psi(\bm{x}) =
    \bm{f}(\bm{x})^\top\mathbf{M}(\bar{\xi}^*)^{-1}\bm{f}(\bm{x})  \leq p
  \end{equation}
  for all $\bm{x}\in\mathcal{X}$. For an optimal invariant design this
  can be written equivalently as $\tilde{\psi}(k)\leq p$ for all
  $k\in\{L,\ldots,U\}$.  Consider the leading coefficient $a_{2}$ of
  $\tilde{\psi}$.

  Let $a_{2}\leq 0$, then, following from the equivalence theorem,
  either $\tilde{\psi}(k)=p$ for exactly one $k\in\{L,\ldots,U\}$ or
  $\tilde{\psi}(k)=\tilde{\psi}(k+1)=p$ for some
  $k\in\{L,\ldots,U-1\}$.

  In the latter case $\tilde{\psi}(k)\leq p$ for all $k\in\{0,\ldots,K\}$ and
  thus the design would be optimal not only on the design region
  $\mathcal{X}$ but on the whole set $\{-1,+1\}^{K}$. It follows, that
  the information matrix is the identity matrix.  But this
  contradicts, that by Lemma~\ref{lem:m1m2not0} either $m_{1}\neq 0$
  or $m_{2}\neq 0$.
  
  In the first case the optimal design would be concentrated on the
  orbit $\mathcal{O}_{k}$, which leads to a singular information
  matrix and consequently to a contradiction.
  
  Hence, $a_{2}>0$ and $\tilde{\psi}$ attains its maximum (equal to
  $p$) on the boundary, i.e.\ the optimal design is concentrated on
  the orbits $\mathcal{O}_{L}$ and $\mathcal{O}_{U}$.

  In order to obtain the optimal design, it remains to find the optimal
  weight $w_L^*$ on $\mathcal{O}_{L}$ (and consequently
  $w_U^*=1-w_L^*$ on $\mathcal{O}_U$). Optimizing the determinant then
  yields the optimal weight $w_L^*$ specified in the theorem.

\end{proof}

\begin{proof}[Proof of Theorem~\ref{thm:DoptEffAsFactorial}]
  Since any invariant design $\bar{\xi}$ on $\mathcal{X}$ with
  $m_{1}=m_{2}=0$ is optimal on the unrestricted design region
  $\{-1,+1\}^K$, by majorization, these designs are also optimal on
  the design region $\mathcal{X}$.
  
  For $K=1$ there is nothing to show.  For $K\geq 2$ we will show that
  the design in~\eqref{eq:optDesign3Orbits} yields $m_{1}=m_{2}=0$
  and hence is optimal.

  It follows from condition~\eqref{eq:condEffAsFact}
  of the theorem, that
  \begin{equation*}
    L< \frac{K-\sqrt{K}}{2} \quad\text{or}\quad U>\frac{K+\sqrt{K}}{2}
  \end{equation*}
  and, that $L < K/2 < U$. Hence we can choose the interior orbit
  $\mathcal{O}_{\ell}$ as described after~\eqref{eq:optDesign3Orbits}.
  The weights are non-negative by the choice of $\ell$, since
  $L<\ell<U$. For the first moment $m_{1}$ it follows that
  \begin{multline*}
    m_{1}
    = \frac{2(L-\ell)}{K}\bar{w}_{L}^*
      +\frac{2\ell-K}{K}
      +\frac{2(U-\ell)}{K}\bar{w}_{U}^*\\
    =\frac{(2\ell-K)(-2U+K+2U-2L+2L-K)}{2K(U-L)} = 0\,.
  \end{multline*}
  The second quantity $m_{2}$ can be written as
  \begin{equation*}
    m_{2}
    = \frac{4(\ell-L)(K-L-\ell)}{K(K-1)}\bar{w}_{L}^*
    +\frac{(2\ell-K)^2-K}{K(K-1)}
    +\frac{4(U-\ell)(U+\ell-K)}{K(K-1)}\bar{w}_{U}^*\,.
  \end{equation*}
  Substituting the weights yields
  \begin{multline*}
    m_{2}=\frac{(K-L-\ell)(K+(2\ell-K)(2U-K))
      +((2\ell-K)^2-K)(U-L)}{K(K-1)(U-L)}\\
    +\frac{(U+\ell-K)(K+(2\ell-K)(2L-K))}{K(K-1)(U-L)}=0\,.
  \end{multline*}
  Hence $m_{1}=m_{2}=0$ and the given design is $D$-optimal, which
  concludes the proof.
\end{proof}

\newpage
\section*{Appendix B: Tables}
\begin{table}[H]
\centering
\caption{Examples for optimal invariant two orbit designs from
  Theorem~\ref{thm:Dopt2Orbit} and their efficiency with respect to
  the full factorial design}
\label{tab:opt2orbit}
\fbox{%
\begin{tabular}{rrrrrrrr}
  $K$ & $\frac{K-\sqrt{K}}{2}$ & $\frac{K+\sqrt{K}}{2}$ & $L$ & $U$
  & $\bar{w}_{L}^*$ & $\bar{w}_{U}^*$ & Efficiency \\ 
  \hline
  $2$ & $0.29$ & $1.71$ & $0$ & $1$ & $0.3333$ & $0.6667$ & $0.8399$ \\ 
  \hline
  $3$ & $0.63$ & $2.37$ & $0$ & $1$ & $0.2500$ & $0.7500$ & $0.7071$ \\ 
  \cline{4-8}
      & & & $1$ & $2$ & $0.5000$ & $0.5000$ & $0.8774$ \\ 
  \hline
  $4$ & $1.00$ & $3.00$ & $0$ & $1$ & $0.2000$ & $0.8000$ & $0.6063$ \\ 
      & & & $0$ & $2$ & $0.2000$ & $0.8000$ & $0.9507$ \\ 
  \cline{4-8}
      & & & $1$ & $2$ & $0.4000$ & $0.6000$ & $0.8386$ \\ 
  \hline
  $5$ & $1.38$ & $3.62$ & $0$ & $1$ & $0.1667$ & $0.8333$ & $0.5291$ \\ 
      & & & $0$ & $2$ & $0.1667$ & $0.8333$ & $0.8736$ \\ 
  \cline{4-8}
      & & & $1$ & $2$ & $0.3333$ & $0.6667$ & $0.7828$ \\ 
      & & & $1$ & $3$ & $0.3333$ & $0.6667$ & $0.9863$ \\ 
  \cline{4-8}
      & & & $2$ & $3$ & $0.5000$ & $0.5000$ & $0.8636$ \\ 
  \hline
  $6$ & $1.78$ & $4.22$ & $0$ & $1$ & $0.1429$ & $0.8571$ & $0.4688$ \\ 
      & & & $0$ & $2$ & $0.1429$ & $0.8571$ & $0.7994$ \\ 
      & & & $0$ & $3$ & $0.1429$ & $0.8571$ & $0.9764$ \\ 
  \cline{4-8}
      & & & $1$ & $2$ & $0.2857$ & $0.7143$ & $0.7263$ \\ 
      & & & $1$ & $3$ & $0.2590$ & $0.7410$ & $0.9486$ \\ 
  \cline{4-8}
      & & & $2$ & $3$ & $0.4286$ & $0.5714$ & $0.8491$ \\ 
      & & & $2$ & $4$ & $0.5000$ & $0.5000$ & $0.9882$ \\ 
  \hline
  $9$ & $3.00$ & $6.00$ & $0$ & $1$ & $0.1000$ & $0.9000$ & $0.3482$ \\ 
      & & & $0$ & $2$ & $0.1000$ & $0.9000$ & $0.6259$ \\ 
      & & & $0$ & $3$ & $0.1000$ & $0.9000$ & $0.8299$ \\ 
      & & & $0$ & $4$ & $0.1000$ & $0.9000$ & $0.9564$ \\ 
  \cline{4-8}
      & & & $1$ & $2$ & $0.2000$ & $0.8000$ & $0.5844$ \\ 
      & & & $1$ & $3$ & $0.1643$ & $0.8357$ & $0.8045$ \\ 
      & & & $1$ & $4$ & $0.1545$ & $0.8455$ & $0.9432$ \\ 
      & & & $1$ & $5$ & $0.1545$ & $0.8455$ & $0.9991$ \\ 
  \cline{4-8}
      & & & $2$ & $3$ & $0.3000$ & $0.7000$ & $0.7465$ \\ 
      & & & $2$ & $4$ & $0.2539$ & $0.7461$ & $0.9158$ \\ 
      & & & $2$ & $5$ & $0.2539$ & $0.7461$ & $0.9932$ \\ 
  \cline{4-8}
      & & & $3$ & $4$ & $0.4000$ & $0.6000$ & $0.8418$ \\ 
      & & & $3$ & $5$ & $0.4000$ & $0.6000$ & $0.9670$ \\ 
  \cline{4-8}
      & & & $4$ & $5$ & $0.5000$ & $0.5000$ & $0.8733$ \\ 
\end{tabular}}
\end{table}

\begin{table}[ht]
\centering
\caption{Optimal invariant three orbit designs from
  Theorem~\ref{thm:DoptEffAsFactorial}}
\label{tab:opt3orbit}
\fbox{%
\begin{tabular}{rrrrrrr}
  $K$ & $L$ & $U$ & $\ell$ & $\bar{w}_{L}^*$ & $\bar{w}_{U}^*$ & $\bar{w}_{\ell}^*$ \\ 
  \hline
  2 & 0 & 2 & 1 & 0.2500 & 0.2500 & 0.5000 \\ 
  \hline
  3 & 0 & 2 & 1 & 0.2500 & 0.7500 &   $-$ \\ 
      & 0 & 3 & 1 &   $-$ & 0.2500 & 0.7500 \\ 
  \hline
  4 & 0 & 3 & 2 & 0.1667 & 0.3333 & 0.5000 \\ 
      & 0 & 4 & 2 & 0.1250 & 0.1250 & 0.7500 \\ 
  \cline{2-7} & 1 & 3 & 2 & 0.5000 & 0.5000 &   $-$ \\ 
  \hline
  5 & 0 & 3 & 2 & 0.1667 & 0.8333 &   $-$ \\ 
      & 0 & 4 & 2 & 0.0625 & 0.3125 & 0.6250 \\ 
      & 0 & 5 & 2 &   $-$ & 0.1667 & 0.8333 \\ 
  \cline{2-7} & 1 & 4 & 2 & 0.1667 & 0.3333 & 0.5000 \\ 
  \hline
  6 & 0 & 4 & 3 & 0.1250 & 0.3750 & 0.5000 \\ 
      & 0 & 5 & 3 & 0.1000 & 0.1500 & 0.7500 \\ 
      & 0 & 6 & 3 & 0.0833 & 0.0833 & 0.8333 \\ 
  \cline{2-7} & 1 & 4 & 3 & 0.2500 & 0.5000 & 0.2500 \\ 
      & 1 & 5 & 3 & 0.1875 & 0.1875 & 0.6250 \\ 
  \hline
  9 & 0 & 5 & 4 & 0.1000 & 0.9000 &   $-$ \\ 
      & 0 & 6 & 4 & 0.0625 & 0.3750 & 0.5625 \\ 
      & 0 & 7 & 4 & 0.0357 & 0.2143 & 0.7500 \\ 
      & 0 & 8 & 4 & 0.0156 & 0.1406 & 0.8438 \\ 
      & 0 & 9 & 4 &   $-$ & 0.1000 & 0.9000 \\ 
  \cline{2-7} & 1 & 6 & 4 & 0.1000 & 0.4000 & 0.5000 \\ 
      & 1 & 7 & 4 & 0.0556 & 0.2222 & 0.7222 \\ 
      & 1 & 8 & 4 & 0.0238 & 0.1429 & 0.8333 \\ 
  \cline{2-7} & 2 & 6 & 4 & 0.1875 & 0.4375 & 0.3750 \\ 
      & 2 & 7 & 4 & 0.1000 & 0.2333 & 0.6667 \\ 
  \cline{2-7} & 3 & 6 & 4 & 0.5000 & 0.5000 &   $-$ \\ 
\end{tabular}}
\end{table}
\begin{table}[ht]
\centering
\caption{Optimal symmetric invariant four orbit designs from
  Theorem~\ref{thm:DoptEffAsFactorial}}
\label{tab:opt4orbit}
\fbox{%
\begin{tabular}{rrrrrrrrrrr}
  $K$ & $L$ & $U$ & $k_{1}^*$ & $k_{2}^*$ & $k_{3}^*$ & $k_{4}^*$
  & $\bar{w}_{1}^*$ & $\bar{w}_{2}^*$ & $\bar{w}_{3}^*$ & $\bar{w}_{4}^*$ \\ 
  \hline
  3 & 0 & 3 & 0 & 1 & 2 & 3& 0.1250 & 0.3750  & 0.3750 & 0.1250\\ 
  \hline
  4 & 0 & 4 & 0 & 1 & 3 & 4 & $-$ & 0.5000 & 0.5000 & $-$ \\ 
  \hline
  5 & 0 & 4 & 1 & 2 & 3 & 4 & 0.2500 & 0.2500 & 0.2500 & 0.2500 \\ 
    & 0 & 5 & 0 & 2 & 3 & 5 & 0.0833 & 0.4167 & 0.4167 & 0.0833 \\ 
    & 0 & 5 & 1 & 2 & 3 & 4 & 0.2500 & 0.2500 & 0.2500 & 0.2500 \\ 
  \cline{2-11}                                           
    & 1 & 4 & 1 & 2 & 3 & 4 & 0.2500 & 0.2500 & 0.2500 & 0.2500 \\ 
  \hline                                                
  6 & 0 & 5 & 1 & 2 & 4 & 5 & 0.0833 & 0.4167 & 0.4167 & 0.0833 \\ 
    & 0 & 6 & 0 & 2 & 4 & 6 & 0.0312 & 0.4688 & 0.4688 & 0.0312 \\ 
    & 0 & 6 & 1 & 2 & 4 & 5 & 0.0833 & 0.4167 & 0.4167 & 0.0833 \\ 
  \cline{2-11}                                           
    & 1 & 5 & 1 & 2 &  4 & 5 & 0.0833 & 0.4167 & 0.4167 & 0.0833 \\ 
  \hline                                                
  9 & 0 & 7 & 2 & 4 & 5 & 7 & 0.1667 & 0.3333 & 0.3333 & 0.1667 \\ 
    & 0 & 8 & 1 & 4 & 5 & 8 & 0.0833 & 0.4167 & 0.4167 & 0.0833 \\ 
    & 0 & 8 & 2 & 4 & 5 & 7 & 0.1667 & 0.3333 & 0.3333 & 0.1667 \\ 
    & 0 & 9 & 0 & 4 & 5 & 9 & 0.0500 & 0.4500 & 0.4500 & 0.0500 \\ 
    & 0 & 9 & 1 & 4 & 5 & 8 & 0.0833 & 0.4167 & 0.4167 & 0.0833 \\ 
    & 0 & 9 & 2 & 4 & 5 & 7 & 0.1667 & 0.3333 & 0.3333 & 0.1667 \\ 
  \cline{2-11}                                           
    & 1 & 7 & 2 & 4 & 5 & 7 & 0.1667 & 0.3333 & 0.3333 & 0.1667 \\ 
    & 1 & 8 & 1 & 4 & 5 & 8 & 0.0833 & 0.4167 & 0.4167 & 0.0833 \\ 
    & 1 & 8 & 2 & 4 & 5 & 7 & 0.1667 & 0.3333 & 0.3333 & 0.1667 \\ 
  \cline{2-11}                                           
    & 2 & 7 & 2 & 4 & 5 & 7 & 0.1667 & 0.3333 & 0.3333 & 0.1667 \\ 
\end{tabular}}
\end{table}

\begin{table}[ht]
\centering
\caption{Design for the invariant optimal two orbit design for $K=6$
  with $L=2$ and $U=4$, $+$ an $-$ denote $+1$ and $-1$, respectively}
\label{tab:desmatopt}
\fbox{%
\begin{tabular}{lllllll}
   $+$ & $+$ & $-$ & $-$ & $-$ & $-$ \\ 
   $+$ & $-$ & $+$ & $-$ & $-$ & $-$ \\ 
   $-$ & $+$ & $+$ & $-$ & $-$ & $-$ \\ 
   $+$ & $-$ & $-$ & $+$ & $-$ & $-$ \\ 
   $-$ & $+$ & $-$ & $+$ & $-$ & $-$ \\ 
   $-$ & $-$ & $+$ & $+$ & $-$ & $-$ \\ 
   $+$ & $-$ & $-$ & $-$ & $+$ & $-$ \\ 
   $-$ & $+$ & $-$ & $-$ & $+$ & $-$ \\ 
   $-$ & $-$ & $+$ & $-$ & $+$ & $-$ \\ 
   $-$ & $-$ & $-$ & $+$ & $+$ & $-$ \\ 
   $+$ & $-$ & $-$ & $-$ & $-$ & $+$ \\ 
   $-$ & $+$ & $-$ & $-$ & $-$ & $+$ \\ 
   $-$ & $-$ & $+$ & $-$ & $-$ & $+$ \\ 
   $-$ & $-$ & $-$ & $+$ & $-$ & $+$ \\ 
   $-$ & $-$ & $-$ & $-$ & $+$ & $+$ \\ 
   $+$ & $+$ & $+$ & $+$ & $-$ & $-$ \\ 
   $+$ & $+$ & $+$ & $-$ & $+$ & $-$ \\ 
   $+$ & $+$ & $-$ & $+$ & $+$ & $-$ \\ 
   $+$ & $-$ & $+$ & $+$ & $+$ & $-$ \\ 
   $-$ & $+$ & $+$ & $+$ & $+$ & $-$ \\ 
   $+$ & $+$ & $+$ & $-$ & $-$ & $+$ \\ 
   $+$ & $+$ & $-$ & $+$ & $-$ & $+$ \\ 
   $+$ & $-$ & $+$ & $+$ & $-$ & $+$ \\ 
   $-$ & $+$ & $+$ & $+$ & $-$ & $+$ \\ 
   $+$ & $+$ & $-$ & $-$ & $+$ & $+$ \\ 
   $+$ & $-$ & $+$ & $-$ & $+$ & $+$ \\ 
   $-$ & $+$ & $+$ & $-$ & $+$ & $+$ \\ 
   $+$ & $-$ & $-$ & $+$ & $+$ & $+$ \\ 
   $-$ & $+$ & $-$ & $+$ & $+$ & $+$ \\ 
   $-$ & $-$ & $+$ & $+$ & $+$ & $+$ \\ 
\end{tabular}}
\end{table}
\end{document}